
\documentclass[11pt]{amsart}

\usepackage{amsfonts, amstext, amsmath, amsthm, amscd, amssymb, upgreek}
\usepackage{graphicx, color,  subfigure, wrapfig, overpic, url, wasysym}
\usepackage[dvipsnames]{xcolor}
\usepackage{hyperref}
\textwidth 6.07in 
\textheight 8.6in 
\oddsidemargin 0.18in
\evensidemargin 0.18in
 

\usepackage{marginnote}
\long\def\@savemarbox#1#2{\global\setbox#1\vtop{\hsize\marginparwidth 
  \@parboxrestore\tiny\raggedright #2}}
\marginparwidth .75in \marginparsep 7pt

\AtBeginDocument{
   \def\MR#1{}
}

\newcommand{\R}{\mathbb{R}}
\newcommand{\Z}{\mathbb{Z}}

\renewcommand{\H}{\mathbb{H}}

\newcommand{\T}{\mathcal T}

\newcommand{\W}{\mathcal W}
\newcommand{\vol}{{\rm vol}}

\newcommand{\voct}{{v_{\rm oct}}}
\newcommand{\vtet}{{v_{\rm tet}}}
\renewcommand{\L}{\mathcal L}

\newcommand{\G}{\mathcal G}
\newcommand{\m}{\mathrm{m}}
\newcommand{\id}{\mathrm{id}}
\newcommand{\Aut}{\mathrm{Aut}}

\newcommand{\GG}{\langle\Gamma\rangle}
\newcommand{\Q}{\mathcal Q}

\theoremstyle{plain}
\newtheorem{theorem}{Theorem}
\newtheorem{corollary}{Corollary}
\newtheorem{lemma}{Lemma}

\newtheorem{conjecture}{Conjecture}

\newtheorem*{namedtheorem}{\theoremname}
\newcommand{\theoremname}{testing}

\theoremstyle{definition}

\newtheorem{question}{Question}

\newcommand{\refeqn}[1]{equation~\eqref{Eq:#1}}


\title[Examples of homological torsion and volume growth]
  {Examples of homological torsion and volume growth}
\author[A.\ Champanerkar]{Abhijit Champanerkar}
\address{Department of Mathematics, College of Staten Island \& The Graduate Center, City University of New York, New York, NY}
\email{abhijit@math.csi.cuny.edu}
\author[I. \ Kofman]{Ilya Kofman}
\address{Department of Mathematics, College of Staten Island \& The Graduate Center, City University of New York, New York, NY}
\email{ikofman@math.csi.cuny.edu}

\thanks{June 6, 2019.}

\begin{document}

\begin{abstract}
We provide examples of towers of covers of cusped hyperbolic
$3$--manifolds whose exponential homological torsion growth is explicitly
computed in terms of volume growth.  These examples arise from
abelian covers of alternating links in the thickened torus.
A corollary is that the spanning tree entropy for each regular
planar lattice is given by the volume of a hyperbolic polyhedron.
\end{abstract}

\maketitle

\section{Introduction}
For a manifold $M$, a \emph{tower of covers} is a sequence of finite covers,
$$ \cdots \rightarrow M_n \rightarrow M_{n-1} \rightarrow \cdots
\rightarrow M_1 \rightarrow M_0 = M. $$ If $M$ is a $3$--manifold, the
homology groups $H_1(M_n;\Z)$ can have arbitrarily large torsion
subgroups, denoted here by $TH_1(M_n)$.  For a tower of covers of $M$,
the growth rate of the order of $TH_1(M_n)$ has attracted recent
interest in the context of L\"uck's far-reaching Approximation
Conjecture in $L^2$--torsion theory (see \cite{Lueck}).  As a special case
(\cite[Section 7.5]{AFW}, \cite{thang}, \cite[Example~10.5]{Lueck}), if $M$ is a hyperbolic
$3$--manifold with empty or toroidal boundary then, conjecturally, for any tower of regular covers
$M_n$ such that $\bigcap_{n}\pi_1 M_n=\{1\}$,
\begin{equation}\label{Eq:6pi}
  \lim_{n\to\infty}\frac{\log|TH_1(M_n)|}{\vol(M_n)}=\frac{1}{6\pi}.
\end{equation}
With more relaxed conditions on $M$ and $\{M_n\}$, Le \cite{Le2018}
proved $\displaystyle
\limsup_{n\to\infty}\frac{\log|TH_1(M_n)|}{\vol(M_n)}\leq\frac{1}{6\pi}.$ 
\vspace*{0.1cm}

For a knot complement $M=S^3-K$ and finite cyclic covers $X_n$ given by $\pi_1(M)\to\Z/n\Z$, the $L^2$--torsion is well understood.
If $\m(\Delta_K(t))$ denotes the logarithmic Mahler measure of the Alexander polynomial of $K$, 
then the following result proved in \cite{SW2} has been interpreted
(see \cite{Le2014,Raimbault}) as a special case of the Approximation
Conjecture for $\{X_n\}$,
\begin{equation}\label{Eq:SW}
\lim_{n\to\infty}\frac{\log|TH_1(X_n)|}{n}=\m(\Delta_K(t)).
\end{equation}

Our main result, Theorem~\ref{Thm2} below, is a pair of towers of abelian covers of cusped arithmetic
hyperbolic $3$--manifolds, which have totally geodesic surfaces in every cover, such that
$$\lim_{n\to \infty}\frac{\log|TH_1(M_n)|}{\vol(M_n)}=\frac{1}{4\pi}.$$
As far as we know, these are the first examples of towers of non-cyclic covers of
hyperbolic $3$--manifolds whose exponential homological torsion growth
can be computed exactly in terms of volume growth.
These towers of abelian covers are not cofinal as in \refeqn{6pi}. 
They are derived from previous results \cite{ck:det_mp, ckl:mm_voldet, ckp:gmax, ckp:bal_vol}
on alternating links in the thickened torus.

After discussing these two examples, we present a broader context and
a related conjecture for infinite families of links.  Finally, we
prove as a corollary that the spanning tree entropy for each regular
planar lattice is given by the volume of a hyperbolic polyhedron.

\section{Main results}

\begin{figure}[h] 
\begin{tabular}{cccc}
 \includegraphics[height=0.9in]{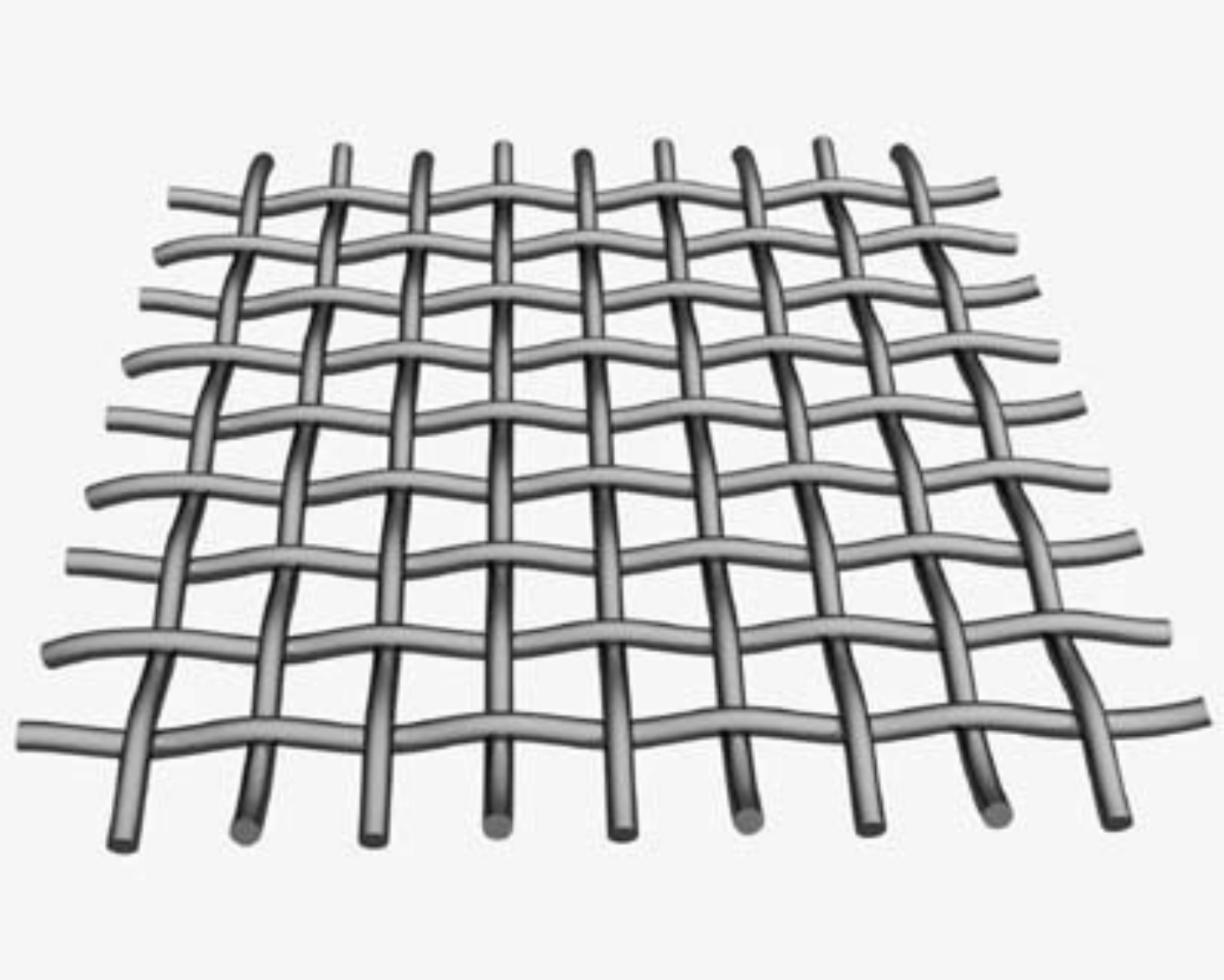} &  \hspace*{0.5cm}
 \includegraphics[height=0.8in]{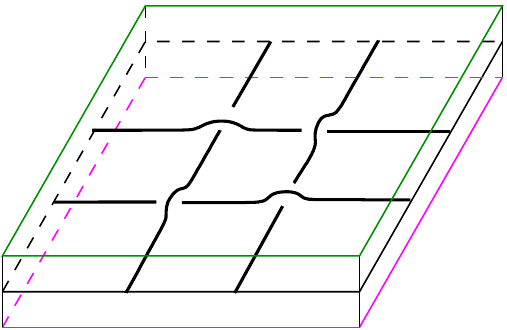} &  \hspace*{0.5cm}
 \includegraphics[height=0.9in]{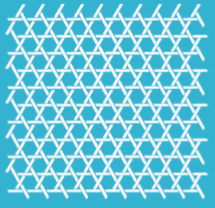} &
\hspace*{0.5cm}
\includegraphics[height=0.9in]{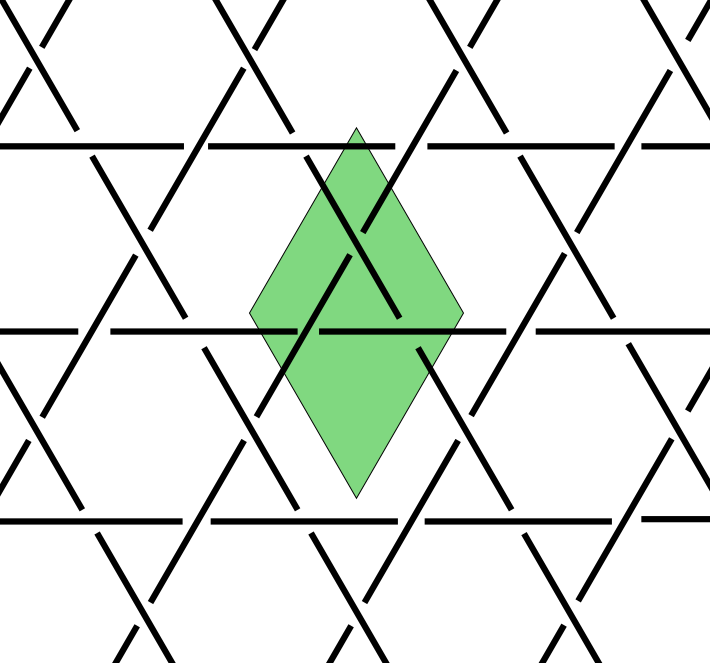} \\
  (a) & \hspace*{0.5cm} (b) & \hspace*{0.5cm} (c) & \hspace*{0.5cm} (d)
\end{tabular}
\caption{(a) Square weave $\W$ in $\R^2\times I$. (b) Link $W_1$ in
  $T^2\times I$. (c) Triaxial link $\Q$ in $\R^2\times I$. (d)
  Fundamental domain for the link $Q_1$ in $T^2\times I$.}
\label{fig:weave}
\end{figure}

\subsection{Biperiodic links} 
A {\em biperiodic alternating link} is a link in $\R^2\times I$, with
$I=(-1,1)$, with a diagram on $\R^2 \times \{0\}$ which can be
isotoped to be invariant under $\Z^2$ acting by independent
translations of $\R^2$, such that the quotient link diagram is
alternating on the torus $T^2 \times \{0\}$.
Figure~\ref{fig:weave}(a) shows the
biperiodic alternating link $\W$, called the square weave, with the
fundamental domain in $T^2\times I$ shown in
Figure~\ref{fig:weave}(b).  Let $W_n$ be the toroidally alternating
link in $T^2\times I$, which is the quotient of $\W$ by the
$n\Z\times n\Z$ action, so that the crossing number $c(W_n)=4n^2$.
Figure~\ref{fig:weave}(c) shows another biperiodic alternating link,
the triaxial link $\Q$.  The fundamental domain for its toroidally
alternating quotient link $Q_1$ in $T^2\times I$ is shown in
Figure~\ref{fig:weave}(d).  Taking the quotient by $n\Z\times n\Z$, we
get the links $Q_n$ in $T^2\times I$ with $c(Q_n)=3n^2$.

For any link $L$ in $T^2\times I$, we have $T^2\times I-L \cong
S^3-\ell$, where $\ell$ has a Hopf sublink given by the cores of the two
Heegaard tori.  In the case of $W_1, \ \ell=L12n2256$
(see~\cite[Figure~4]{ckp:gmax}).  In the case of $Q_1, \ \ell=L12n2232$
(see~\cite[Figure~12]{ckp:bal_vol}), whose complement is homeomorphic to
that of the minimally twisted $5$--chain link.
The complements of both $W_1$ and $Q_1$ are principal congruence manifolds \cite{BGR2018}.

More generally, let $\L$ be any biperiodic alternating link, with a
toroidally alternating quotient link $L_1$ in the fundamental domain
$T^2\times I$.  For $n\geq 1$, taking the quotient by $n\Z\times n\Z$
as above, we get a sequence of toroidally alternating links $L_n$ in
$T^2\times I$.  
Let $X(L_n)$ be the 2--fold cyclic cover of $T^2\times I-L_n$, which
is the cover associated to the kernel of $\displaystyle
\pi_1(T^2\times I-L_n)\to\Z/2\Z$, with each meridian of $L_n$ mapped
to $1$, and the two Hopf link meridians mapped to $0$.
Thus, $\{X(L_n)\}$ are finite covers of $T^2\times I-L_1$, corresponding to finite index subgroups of $\Z^2$.
Moreover, $\R^2\times I-\L$ and, hence, all $T^2\times I-L_n$ and $X(L_n)$ are often
hyperbolic $3$--manifolds; see \cite{ckp:bal_vol}.
In particular, $\W$ and $\Q$ are hyperbolic.

\subsection{Torsion and Alexander polynomials} 
Suppose $T^2\times I-L_1 \cong S^3-\ell$.  The Hopf link meridians
acquire an orientation from the action by $\Z^2$ on the universal
cover of $T^2\times \{0\}$.  Following \cite{sw:graph_complexity}, we
can define the Alexander polynomial
$\Delta_{\ell}(-1,\ldots,-1,x,y)$ via Fox calculus, with
$\Z[\pi_1(S^3-\ell)]\to\Z[x^{\pm 1},y^{\pm 1}]$ given by mapping the
oriented Hopf link meridians to $x,y$, and the other meridians to
$-1$.

Using results in \cite{sw:graph_complexity} and \cite{Dimitrov}, we now extend \refeqn{SW} to these non-cyclic covers.

\begin{lemma}\label{Thm1}
For any biperiodic alternating link $\L$, the manifolds $\{X(L_n)\}$
  form a sequence of covers over the $3$--manifold $X(L_1)$, such that
    $$ \lim_{n\to \infty}\frac{\log|TH_1(X(L_n))|}{n^2}=\m(\Delta_{\ell}(-1,\ldots,-1,x,y)).$$
Passing to the subsequence $n=2^j$ for $j\geq 0$, we get a tower of covers with the same limit.
\end{lemma}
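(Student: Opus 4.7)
The plan is to realize each $X(L_n)$ as a finite abelian cover of $S^3-\ell\cong T^2\times I-L_1$, apply the Silver-Williams torsion formula expressing $|TH_1|$ of such a cover as a product of values of the multivariable Alexander polynomial $\Delta_\ell$, and identify the $n\to\infty$ limit of the resulting log-average as a two-dimensional Mahler measure via Dimitrov's equidistribution theorem.

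First I verify that $X(L_n)\to S^3-\ell$ is a regular cover with abelian Galois group $G_n:=\Z/2\oplus\Z/n\oplus\Z/n$. The $(\Z/n)^2$-translation action on $T^2\times I-L_n$ permutes meridians of $L_n$ among themselves, so the defining $\Z/2$-character is invariant under the action and the two stages of covers assemble into an abelian one. The corresponding surjection $\phi_n\co H_1(S^3-\ell)\to G_n$ sends every meridian of a component of $L_1$ to $(1,0,0)$ and the two Hopf meridians to $(0,1,0)$ and $(0,0,1)$. Since $\phi_1$ factors as the projection of $\phi_n$ onto the $\Z/2$-factor, $X(L_n)$ covers $X(L_1)$, and along $n=2^j$ the descending chain $\{2^j\Z\times 2^j\Z\}$ of subgroups of $\Z^2$ yields the desired tower. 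By the Silver-Williams formula (\cite{sw:graph_complexity}), with characters parametrized by $(i,j,k)\in\{0,1\}\times(\Z/n)^2$,
$$\log|TH_1(X(L_n))|=\sum_{\substack{(i,j,k)\ne(0,0,0)\\ \Delta_\ell(\chi_{i,j,k})\ne 0}}\log|\Delta_\ell((-1)^i,\ldots,(-1)^i,\omega_n^j,\omega_n^k)|+O(\log n),$$
which splits cleanly into an $i=0$ and an $i=1$ piece.

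Dividing by $n^2$, each piece is a two-dimensional Riemann sum of $\log|\Delta_\ell(\pm1,\ldots,\pm1,\omega_n^j,\omega_n^k)|$. By Dimitrov's equidistribution theorem \cite{Dimitrov}, such sums converge to the corresponding Mahler measure even when the polynomial vanishes on the unit torus; the $i=1$ piece therefore contributes $\m(\Delta_\ell(-1,\ldots,-1,x,y))$, the desired limit. For the $i=0$ piece, iterated application of Torres' condition yields a factorization
$$\Delta_\ell(1,\ldots,1,x,y)=\prod_{r=1}^{d}\bigl(x^{\operatorname{lk}(H_1,\ell_r)}y^{\operatorname{lk}(H_2,\ell_r)}-1\bigr)\cdot\Delta_H(x,y),$$
where $H$ denotes the Hopf sublink (with $\Delta_H=1$). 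Each binomial factor has vanishing Mahler measure by Jensen's formula, so this piece contributes $0$ in the limit. I anticipate the main technical obstacle will be controlling the contribution from characters at which $\Delta_\ell$ is close to vanishing on the unit torus, which is precisely what Dimitrov's refinement of Lawton's theorem is designed to handle; the remaining uniform estimates controlling the $O(\log n)$ error between the exact Silver-Williams formula and the pure Riemann sum are straightforward bookkeeping.
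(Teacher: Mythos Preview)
Your route is genuinely different from the paper's.  The paper never writes a character-sum formula for $|TH_1(X(L_n))|$ in terms of $\Delta_\ell$ at all.  Instead it passes through the Tait graph: the Goeritz matrix of the toroidally alternating diagram is the Laplacian of $G_{L_n}=G_{\L}/(n\Z)^2$, and this matrix presents the homology of the branched double cover, whose torsion agrees with $TH_1(X(L_n))$.  Thus $|TH_1(X(L_n))|$ is identified with a spanning-tree count, whose exponential growth rate is the Mahler measure of the single two-variable Laplacian polynomial $D_{G_{L_1}}(x,y)$; only at the very end is $\m(D_{G_{L_1}})$ matched with $\m(\Delta_\ell(-1,\ldots,-1,x,y))$ by extending \cite[Theorem~5.1]{sw:graph_complexity}.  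So the paper reduces everything to one polynomial and invokes Dimitrov once, whereas you keep the full multivariable $\Delta_\ell$, split over the $\Z/2$-factor, and must separately dispose of the $i=0$ half.

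Your outline is reasonable but has two soft spots you should not wave away.  First, the displayed ``Silver--Williams formula'' with an $O(\log n)$ remainder is not a theorem in \cite{sw:graph_complexity}; the exact relation between $|TH_1|$ of a finite abelian unbranched cover and $\prod_{\chi}|\Delta_\ell(\chi)|$ involves higher Fitting ideals precisely at the characters where $\Delta_\ell$ vanishes, and controlling those corrections is the substantive content of Le's work rather than bookkeeping.  Second, your Torres analysis of the $i=0$ piece assumes $\Delta_\ell(1,\ldots,1,x,y)\not\equiv 0$; if some component of $L_1$ is null-homologous in $T^2$ (which the hypotheses do not exclude), one of your binomial factors is $x^0y^0-1=0$ and the argument collapses.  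The paper's Goeritz/Laplacian route sidesteps both issues: it captures all the torsion in a single nonzero polynomial $D_{G_{L_1}}$, so there is no $i=0/i=1$ split, no Torres step, and no appeal to higher elementary ideals.
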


\begin{proof}
Let $G_{\L}$ be the Tait graph (checkerboard graph) of $\L$, and let
$G_{L_1}=G_{\L}/\Z^2$.  The spanning tree entropy of $G_{\L}$ is given
by the logarithmic Mahler measure of the Laplacian determinant
polynomial $\m(D_{G_{L_1}}(x,y))$ (see \cite{lyons,
  sw:graph_complexity}).  As a consequence of the Dehn relations from
the link diagram, the Laplacian matrix is a presentation matrix
(Goeritz matrix) for the homology of the branched 2--fold cyclic
cover, which has the same torsion subgroup $TH_1(X(L_n))$.  Extending
the argument in \cite[Theorem~5.1]{sw:graph_complexity} to two
variables, we obtain
$$\m(D_{G_{L_1}}(x,y))=\m(\Delta_{\ell}(-1,\ldots,-1,x,y)).$$
The limit as claimed is established as $\limsup$ in \cite{sw:graph_complexity},
and the actual limit is proved in \cite{Dimitrov} for cubical sublattices, which in this case are $n\Z\times n\Z\subset\Z^2$.
\end{proof}

For general abelian covers, Le \cite{Le2014,Le2018} proved the following result.
Let $p\colon\tilde X\to X$ be any regular cover of a finite CW-complex with $\Aut(p)=\Z^N$.
Let $\m(\Delta_j(H_j(\tilde X)))$ denote the logarithmic Mahler measure of the first non-zero Alexander polynomial of $H_j(\tilde X;\Z)$.
For $\Gamma\subset\Z^N$ of finite index, let $X_{\Gamma}$ be the corresponding finite cover, and let $\GG=\min\{||{\bf x}||, {\bf x}\in\Gamma-\{0\}\}$.
In \cite{Le2014,Le2018}, Le showed
\begin{equation}\label{Eq:Le}
\limsup_{\GG\to\infty}\frac{\log|TH_1(X_{\Gamma})|}{[\Z^N:\Gamma]}=\m(\Delta_j(H_j(\tilde X))).
\end{equation}
If $N=1$, then Le showed that $\limsup$ can be replaced by $\lim$.
For $N>1$ and cubical sublattices $\Gamma=(n\Z)^N$, Dimitrov \cite{Dimitrov} showed that $\limsup$ can be replaced by $\lim$.
Lemma~\ref{Thm1} provides a version of the latter result for $N=2$.

To prove Theorem~\ref{Thm2}, it will be useful to reinterpret
Lemma~\ref{Thm1} using the toroidal dimer model.  For a biperiodic
alternating link $\L$, the planar balanced bipartite {\em overlaid
  graph} $G_{\L}^b$ was defined in \cite{ck:det_mp,
  ckl:mm_voldet}. The black vertices of $G_{\L}^b$ are vertices of the
Tait graph $G_{\L}$ and its planar dual graph $G_{\L}^*$, the white
vertices of $G_{\L}^b$ are crossings of $\L$, and the edges join each
black vertex to every white vertex incident to that face of $\L$.  Let
$p_{L_1}(z,w)$ be the characteristic polynomial of the toroidal dimer
model on $G_{L_1}^b=G^b_{\L}/\Z^2$, as in \cite{ck:det_mp, ckl:mm_voldet}.

\begin{lemma}\label{entropy}
$$  \lim_{n\to \infty}\frac{\log|TH_1(X(L_n))|}{n^2}=\m(p_{L_1}(z,w)). $$
Passing to the subsequence $n=2^j$ for $j\geq 0$, we get a tower of covers with the same limit,
$$ \cdots \rightarrow X(L_{2n}) \rightarrow X(L_n) \rightarrow \cdots \rightarrow X(L_1). $$
\end{lemma}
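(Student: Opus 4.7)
The plan is to deduce Lemma~\ref{entropy} from Lemma~\ref{Thm1} by establishing the Mahler measure identity
\[
\m(\Delta_{\ell}(-1,\ldots,-1,x,y)) \;=\; \m(p_{L_1}(z,w)).
\]
The proof of Lemma~\ref{Thm1} passes through the intermediate equality $\m(\Delta_{\ell}(-1,\ldots,-1,x,y)) = \m(D_{G_{L_1}}(x,y))$, so the real content of the present lemma is to match the Laplacian determinant polynomial of the Tait graph $G_{L_1}$ with the characteristic dimer polynomial of its overlaid bipartite graph $G^b_{L_1}$.

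The main step is to invoke a Temperley-type correspondence on the torus.  In the planar (non-toroidal) setting, \cite{ck:det_mp, ckl:mm_voldet} exploit the bijection between spanning trees of an alternating link's Tait graph and perfect matchings of its overlaid bipartite graph, which at the level of generating functions relates the Laplacian determinant of $G_{\L}$ to the dimer partition function on $G^b_{\L}$.  The plan is to carry out this bijection equivariantly under the $\Z^2$ translation action on $\R^2$, so that on the toroidal quotients one obtains an identity of characteristic polynomials of the form
\[
D_{G_{L_1}}(x,y) \;=\; c\cdot p_{L_1}\bigl(z(x,y),w(x,y)\bigr),
\]
where $c$ is a nonzero scalar and $(z,w)\mapsto(x,y)$ is a full-rank Laurent monomial substitution recording how the $H_1(T^2)$ monodromy classes carried by cycles on $G_{L_1}$ compare to those carried by cycles on $G^b_{L_1}$.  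Since Mahler measure is invariant under multiplication by nonzero constants and under monomial substitutions $(x,y)\mapsto(x^{a}y^{b},x^{c}y^{d})$ with $ad-bc\neq 0$ (such a substitution is a finite-degree covering of the unit torus and preserves the averaging integral), the desired equality of Mahler measures follows.

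For the tower assertion I would verify the covering structure directly.  Since $2^{j+1}\Z\times 2^{j+1}\Z\subset 2^j\Z\times 2^j\Z\subset\Z^2$, the manifold $T^2\times I-L_{2^{j+1}}$ is a regular cover of $T^2\times I-L_{2^j}$.  The homomorphism $\pi_1(T^2\times I-L_{2^j})\to\Z/2\Z$ used to define $X(L_{2^j})$ sends every meridian of $L_{2^j}$ to $1$ and both Hopf meridians to $0$; pulling this homomorphism back along the finite cover $T^2\times I-L_{2^{j+1}}\to T^2\times I-L_{2^j}$ yields the analogous homomorphism for $L_{2^{j+1}}$.  Hence $X(L_{2^{j+1}})\to X(L_{2^j})$ is a regular cover, producing the displayed tower.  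Its limiting ratio coincides with that of the full sequence by Lemma~\ref{Thm1}.

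The principal obstacle will be the careful bookkeeping of the equivariant Temperley correspondence on the torus: one must identify explicitly which Laurent monomial substitution relates the Laplacian twist variables $(x,y)$ to the Kasteleyn dimer variables $(z,w)$, and check that it has nonzero Jacobian determinant so that Mahler measure is preserved.  Once this bookkeeping is done the lemma is a direct consequence of Lemma~\ref{Thm1} together with the constructions of $G^b_{L_1}$ and $p_{L_1}(z,w)$ from \cite{ck:det_mp, ckl:mm_voldet}.
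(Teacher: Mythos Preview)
Your reduction to Lemma~\ref{Thm1} via the Mahler measure identity $\m(\Delta_\ell(-1,\dots,-1,x,y))=\m(p_{L_1}(z,w))$ is the right target, and your treatment of the tower is fine. But your route to that identity differs from the paper's, and is harder than necessary.

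The paper does not attempt any polynomial-level identity of the shape $D_{G_{L_1}}(x,y)=c\cdot p_{L_1}(z(x,y),w(x,y))$. Instead it equates the two Mahler measures indirectly, by recognizing each as the same asymptotic growth rate: \cite[Theorem~3.5]{KOS} gives $\m(p_{L_1})$ as the dimer growth rate on $G^b_\L/(n\Z)^2$, and \cite[Proposition~5.3]{ck:det_mp} says this dimer growth rate equals the spanning tree entropy of $G_\L$, which (from the proof of Lemma~\ref{Thm1}) is $\m(D_{G_{L_1}})$. Two citations, done.

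Your proposed equivariant Temperley correspondence on the torus is a plausible alternative, but it is genuinely more delicate than you indicate. On a torus the Temperley bijection does not give a clean one-to-one correspondence between spanning trees and perfect matchings of the overlaid graph; there are topological correction terms coming from cycle-rooted spanning forests in nontrivial homology classes. So the exact identity $D_{G_{L_1}}=c\cdot p_{L_1}\circ(\text{monomial substitution})$ you posit may fail as stated, even though the Mahler measures do agree. If you pursue this route you would need to control those extra terms and argue they do not affect the Mahler measure, which is considerably more bookkeeping than the two-line argument via growth rates.
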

\begin{proof}
By \cite[Theorem~3.5]{KOS}, the asymptotic growth rate of the toroidal
dimer model on $G^b_{\L}/(n\Z)^2$ is given by the logarithmic Mahler
measure, $\m(p_{L_1}(z,w))$.
By \cite[Proposition~5.3]{ck:det_mp}, the spanning tree entropy of
$G_{\L}$ equals the growth rate of the toroidal dimer model on $G_{\L}^b$.
Hence,
$$ \m(p_{L_1}(z,w))=\m(D_{G_{L_1}}(x,y))=\m(\Delta_{\ell}(-1,\ldots,-1,x,y)).$$
The claim now follows by Lemma~\ref{Thm1}.
\end{proof}

\subsection{Torsion and Volume}
\begin{theorem}\label{Thm2}
  The manifolds $\{X(W_n)\}$ and $\{X(Q_n)\}$ each form a sequence of covers
  over the cusped arithmetic hyperbolic $3$--manifolds $X(W_1)$ and
  $X(Q_1)$, respectively, such that
  $$ \lim_{n\to \infty}\frac{\log|TH_1(X(W_n))|}{\vol(X(W_n))}=\lim_{n\to \infty}\frac{\log|TH_1(X(Q_n))|}{\vol(X(Q_n))}=\frac{1}{4\pi}. $$
  Passing to the subsequence $n=2^j$ for $j\geq 0$, we get towers of covers with the same limit.
  Moreover, $\{X(W_n)\}$ and $\{X(Q_n)\}$ have embedded totally geodesic surfaces for all $n$.
\end{theorem}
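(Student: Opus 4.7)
The plan is to combine Lemma~\ref{entropy} with explicit geometric identities for $W_1$ and $Q_1$ taken from prior work. First I would compute $\vol(X(L_n))$: since $L_n$ is the $(n\Z)^2$ quotient of $\L$, the manifold $T^2\times I - L_n$ is an $n^2$-fold cover of $T^2\times I - L_1$, and $X(L_n)$ is a further $2$-fold cyclic cover, so
$$\vol(X(L_n)) \;=\; 2n^2\,\vol(T^2\times I - L_1).$$
Combined with Lemma~\ref{entropy}, the ratio $\log|TH_1(X(L_n))|/\vol(X(L_n))$ tends to $\m(p_{L_1})/\bigl(2\vol(T^2\times I - L_1)\bigr)$, so the target value $1/(4\pi)$ reduces to the identity
$$\vol(T^2\times I - L_1) \;=\; 2\pi\,\m(p_{L_1}),\qquad L_1\in\{W_1,Q_1\}.$$

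This identity is the geometric core of the argument, and I would extract it from \cite{ck:det_mp, ckl:mm_voldet, ckp:gmax, ckp:bal_vol}. Concretely, $T^2\times I - W_1$ decomposes into two regular ideal octahedra, giving $\vol = 2\voct$, while the dimer polynomial satisfies $\m(p_{W_1}) = \voct/\pi$ (equivalently $4G/\pi$, the classical spanning-tree entropy of $\Z^2$, with $G$ Catalan's constant); similarly $T^2\times I - Q_1$ is homeomorphic to the minimally twisted $5$-chain link complement with $\vol = 10\vtet$ and $\m(p_{Q_1}) = 5\vtet/\pi$. In both cases the relation $\vol = 2\pi\,\m(p_{L_1})$ holds, yielding the claimed limit.

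The cover and tower statements are then formal. Whenever $(n'\Z)^2 \subset (n\Z)^2$, meridians of $L_{n'}$ lift to meridians of $L_n$, so the two defining $\Z/2\Z$-quotients (link meridians $\mapsto 1$, Hopf meridians $\mapsto 0$) are compatible and the induced map $X(L_{n'}) \to X(L_n)$ is a cover; specializing to $(n,n') = (1,n)$ shows $X(L_n)$ covers $X(L_1)$, and specializing to $(n,n')=(2^j,2^{j+1})$ produces the tower. For the totally geodesic surfaces, I would use that the projection plane $T^2\times\{0\}\setminus L_1$ is totally geodesic in $T^2\times I - L_1$: in both examples it appears as a union of equatorial totally geodesic faces of the regular ideal polyhedra in the decomposition (cf.\ \cite{ckp:bal_vol}). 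Its preimage under the cover $X(L_n) \to T^2\times I - L_1$ is an embedded totally geodesic surface in $X(L_n)$ for every $n$.

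The main obstacle is the closed-form identity $\vol(T^2\times I - L_1) = 2\pi\,\m(p_{L_1})$ for these two specific links. It is precisely this identity—known only for the weave and triaxial examples—that produces the clean constant $1/(4\pi)$ and confines the theorem to $\W$ and $\Q$; the rest of the argument (volume multiplicativity under finite covers, compatibility of the $\Z/2\Z$-quotients, and pullback of the totally geodesic projection plane) is essentially formal once this identity is in place.
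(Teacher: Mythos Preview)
Your reduction of the limit to the identity $\vol(T^2\times I - L_1) = 2\pi\,\m(p_{L_1})$ via $\vol(X(L_n))=2n^2\vol(T^2\times I - L_1)$ and Lemma~\ref{entropy} is exactly the paper's argument. The paper records $\vol(T^2\times I - W_1)=4\voct$ and $\vol(T^2\times I - Q_1)=10\vtet$ (with the chosen fundamental domain $c(W_1)=4$, hence four octahedra rather than two) together with the matching Mahler measures from \cite{ckl:mm_voldet}; your normalization for $W_1$ differs by a factor of two but the ratio is unaffected.

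Two genuine gaps remain. First, you never address arithmeticity of $X(W_1)$ and $X(Q_1)$, which is part of the statement; the paper handles this by citing \cite[Theorem~4.1]{ckp:bal_vol}.

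Second, the totally geodesic surface argument does not work as written. The ``projection plane'' $T^2\times\{0\}$ is not an embedded totally geodesic surface in $T^2\times I - L_1$: in the torihedral decomposition $T^2\times I - L_n = \T^+\cup_{\phi\circ\psi}\T^-$ the gluing maps $\phi,\psi$ rotate the shaded and white faces in opposite senses, so the image of $\partial\T^{\pm}$ is a $2$--complex with four faces meeting along each edge, not a surface. What \emph{is} totally geodesic, precisely because the torihedra for $\W$ and $\Q$ are right-angled, are the two checkerboard surfaces separately. The paper makes this explicit by writing $X(L_n)=(\T_1^+\cup_{\phi}\T_2^-)\cup_{\id}(\T_2^+\cup_{\phi}\T_1^-)$ and invoking \cite[Theorem~5.1]{ckp:bal_vol} to conclude that $X(W_n)$ and $X(Q_n)$ have totally geodesic checkerboard surfaces. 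Your pullback strategy is salvageable once you replace the projection torus by a checkerboard surface in $T^2\times I - L_1$: its preimage under the cover $X(L_n)\to T^2\times I - L_1$ is then embedded and totally geodesic.
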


\begin{proof}
Let $\L$ be any hyperbolic biperiodic alternating link, with toroidally alternating quotient links $L_n$ as above.
Since each $X(L_n)$ is a finite cover of $T^2\times I-L_1$,
$$ \vol(X(L_n)) = 2\,\vol(T^2\times I-L_n) = 2n^2\,\vol(T^2\times I-L_1). $$
Hence, the limit in Lemma~\ref{entropy} is equivalent to
\begin{equation}\label{Eq:oldThm1}
\lim_{n\to \infty}\frac{\log|TH_1(X(L_n))|}{\vol(X(L_n))}=\frac{\m(p_{L_1}(z,w))}{2\,\vol(T^2\times I-L_1)}. 
\end{equation}

Specializing to $\W$ and $\Q$, let $\vtet\approx 1.01494$ and
$\voct\approx 3.66386$ be the hyperbolic volumes of the regular ideal
tetrahedron and the regular ideal octahedron, respectively. 
By \cite[Theorems 12, 13]{ckl:mm_voldet}, and adjusting for the fundamental domain in Figure~\ref{fig:weave}(b),
$$ 2\pi\, \m(p_{W_1}) = 4\, \voct \quad \text{and} \quad 2\pi\, \m(p_{Q_1}) = 10\, \vtet. $$
By \cite[Theorem~3.5]{ckp:bal_vol},
$$ \vol(T^2\times I-W_1) = 4\,\voct \quad \text{and} \quad \vol(T^2\times I-Q_1) = 10\,\vtet. $$
The limits now follow as claimed.

We now describe $X(L_n)$ geometrically.  We first decompose $T^2\times
I-L_n$ into two ideal {\em torihedra} $\T^+$ and $\T^-$ as in
\cite[Theorem~2.2]{ckp:bal_vol}. Each torihedron is homeomorphic to
$T^2\times [0,1)$ with finitely many points removed from
  $T^2\times\{0\}$.  If we glue by the identity map along all faces on
  $T^2\times\{0\}$, we have $T^2\times I=\T^+\cup_{\id}\T^-$.  On the
  other hand, if we glue along the checkerboard-colored faces of the
  torihedra on $T^2\times\{0\}$ by homeomorhisms $\phi$ and $\psi$,
  which rotate each shaded face clockwise ($\phi$), and each white
  face counterclockwise ($\psi$), we obtain $T^2\times I-L_n =
  \T^+\cup_{\phi\circ\psi}\T^-$.  Let $\T_1^{\pm}$ and $\T_2^{\pm}$ be
  two copies of each of these torihedra.  Then $X(L_n)$ is obtained as
  follows:
$$X(L_n)=(\T_1^+\cup_{\phi}\T_2^-)\cup_{id}(\T_2^+\cup_{\phi}\T_1^-),$$
where we glue by $\phi$ along shaded faces, and glue by the
identity map along white faces.

By \cite[Theorem~5.1]{ckp:bal_vol}, right-angled torihedra give the complete hyperbolic structure for $T^2\times I-W_n$ and $T^2\times I-Q_n$.
Hence, $X(W_n)$ and $X(Q_n)$ are each obtained by gluing right-angled torihedra.
Thus, by the proof of \cite[Theorem~5.1]{ckp:bal_vol}, $X(W_n)$ and $X(Q_n)$ have totally geodesic checkerboard surfaces.
Finally, arithmeticity follows by \cite[Theorem~4.1]{ckp:bal_vol}.
\end{proof}

For any hyperbolic biperiodic alternating link $\L$, we can construct
the infinite-volume 2--fold cyclic cover $X(\L)$ of $\R^2\times I-\L$
by the same construction as above, replacing each torihedron
$\T^{\pm}$ used in the decomposition of $T^2\times I-L_1$ with its
$\Z^2$--cover, which is homeomorphic to $\R^2 \times [0,1)$, and
gluing faces in the same local pattern as above.  Since $\{X(L_n)\}$
is a sequence of covers, it follows from the definition that its
geometric limit is $X(\L)$ (see \cite{NotesOnNotes}).

The RHS of the conjectured \refeqn{6pi} comes from $L^2$--torsion
theory \cite{Lueck}; namely, for any closed or $1$--cusped hyperbolic
$3$--manifold $M$, the analytic $L^2$--torsion of the corresponding
covering transformations of $\H^3$ is
$$ \rho^{(2)}(M)=-\frac{1}{6\pi}\vol(M).$$
\begin{question}\label{Q1}
Similarly for Theorem~\ref{Thm2}, can $\displaystyle\frac{1}{4\pi}$ be explained in terms of $L^2$--torsion of covering transformations of $X(\W)$ and of $X(\Q)$?
\end{question}

\subsection{Semi-regular links}
The square weave $\W$ and the triaxial link $\Q$ are two examples of a
particularly nice infinite family of hyperbolic \emph{semi-regular links},
studied in \cite{ckp:bal_vol}.  If they do not have bigons, their
planar projections are semi-regular Euclidean tilings, as is the case
for $\W$ and $\Q$.  As a special case of \refeqn{oldThm1}, if $\L$ is
a semi-regular biperiodic alternating link with no bigons, and the
fundamental domain for $L_1$ contains $H$ hexagons and $S$ squares,
then
\begin{equation}\label{Eq:semiregular}
\lim_{n\to \infty}\frac{\log|TH_1(X(L_n))|}{\vol(X(L_n))}=\frac{\m(p_{L_1}(z,w))}{20 H\,\vtet+2 S\,\voct}. 
\end{equation}
Moreover, except for the square weave, $\{X(L_n)\}$ are arithmetic if
and only if the semi-regular Euclidean tiling for $\L$ contains only
triangles and hexagons \cite[Theorem~4.1]{ckp:bal_vol}.

For any hyperbolic biperiodic alternating link $\L$ with toroidally alternating
quotient link $L_1$, conjecturally $\displaystyle \vol(T^2\times
I-L_1)\leq 2\pi\,\m(p_{L_1}(z,w))$ \cite[Conjecture 1]{ckl:mm_voldet}.  We
know of equality occuring only for $\W$ and $\Q$, which are also the
only semi-regular links with totally geodesic checkerboard
surfaces \cite[Theorem~5.1]{ckp:bal_vol}.
Combining the inequality in \cite[Conjecture 1]{ckl:mm_voldet} with
this geometric characterization, we propose the following:

\begin{conjecture}
For any hyperbolic biperiodic alternating link $\L$, with $\{X(L_n)\}$ as above,
  $$ \lim_{n\to \infty}\frac{\log|TH_1(X(L_n))|}{\vol(X(L_n))}\geq\frac{1}{4\pi}, $$
with equality if and only if $\{X(L_n)\}$ have embedded totally geodesic surfaces for all $n$.
\end{conjecture}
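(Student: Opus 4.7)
The plan is to reduce the conjecture to two statements that live purely on the quotient $T^2\times I - L_1$ and its dimer characteristic polynomial $p_{L_1}$, and then attack them separately.

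By equation \eqref{Eq:oldThm1}, which was established for every hyperbolic biperiodic alternating link during the proof of \refthm{Thm2},
$$\lim_{n\to\infty}\frac{\log|TH_1(X(L_n))|}{\vol(X(L_n))}=\frac{\m(p_{L_1}(z,w))}{2\,\vol(T^2\times I-L_1)}.$$
Thus the asserted lower bound $\geq 1/(4\pi)$ is equivalent to
$$\vol(T^2\times I-L_1)\;\leq\;2\pi\,\m(p_{L_1}(z,w)),$$
which is precisely Conjecture~1 of \cite{ckl:mm_voldet}. So the first step is to establish that open inequality. I would attack it by decomposing $T^2\times I-L_1$ into the angled ideal torihedra $\T^{\pm}$ of \cite{ckp:bal_vol} and comparing the sum of the volumes of the resulting ideal pieces with the integral representation $\m(p_{L_1})=\frac{1}{(2\pi)^2}\iint_{[0,2\pi]^2}\log|p_{L_1}(e^{i\theta},e^{i\phi})|\,d\theta\,d\phi$, exploiting concavity of the Lobachevsky function in the spirit of Rivin's volume-maximization principle for ideal polyhedra.

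For the backward ("if") direction of the equality statement, suppose $\{X(L_n)\}$ has embedded totally geodesic surfaces for all $n$. Then by the argument of \cite[Theorem~5.1]{ckp:bal_vol}, suitably extended beyond the semi-regular setting, the torihedral decomposition of $T^2\times I-L_1$ must be realized by right-angled ideal torihedra. A direct computation of both sides, generalizing \cite[Theorem~3.5]{ckp:bal_vol} and \cite[Theorems~12,~13]{ckl:mm_voldet}, then yields $\vol(T^2\times I-L_1)=2\pi\,\m(p_{L_1})$, which by the first reduction is exactly equality in the limit. For the forward ("only if") direction, I would use a rigidity argument attached to the concavity estimate of Step~1: equality in $\vol\leq 2\pi\,\m$ can be attained only at a critical point of the volume form on torihedral angle structures, forcing every dihedral angle of $\T^{\pm}$ to be $\pi/2$; the right-angled torihedra then produce totally geodesic checkerboard surfaces via \cite[Theorem~5.1]{ckp:bal_vol}, and the surfaces lift to every cover $X(L_n)$.

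The main obstacle is Step~1 itself: the inequality of \cite[Conjecture~1]{ckl:mm_voldet} is open, and the rigidity characterization of its equality case is new. I expect the cleanest route is to handle both simultaneously via a single variational analysis of the functional $\vol - 2\pi\,\m$ over the space of torihedral angle structures on $T^2\times I-L_1$, leveraging strict concavity of the Lobachevsky function to identify right-angled configurations as the unique extremizers.
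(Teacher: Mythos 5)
This statement is a conjecture in the paper, not a theorem: the authors offer it as an open problem supported by the examples $\W$, $\Q$, and the Rhombitrihexagonal link, and they do not prove it. Your proposal does not close that gap. Your first reduction is correct and is exactly how the paper arrives at the conjecture: by \refeqn{oldThm1} the limit equals $\m(p_{L_1}(z,w))/\bigl(2\,\vol(T^2\times I-L_1)\bigr)$, so the lower bound $1/(4\pi)$ is equivalent to $\vol(T^2\times I-L_1)\leq 2\pi\,\m(p_{L_1}(z,w))$. But that inequality is precisely \cite[Conjecture~1]{ckl:mm_voldet}, which you acknowledge is open, and your proposed attack on it is a heuristic rather than an argument. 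The difficulty is that there is no known variational characterization of $2\pi\,\m(p_{L_1})$ as the maximum of a volume functional over torihedral angle structures: $\m(p_{L_1})$ is a fixed integral attached to the dimer model, and Rivin-type concavity of the Lobachevsky function bounds $\vol(T^2\times I-L_1)$ by the maximum of the volume functional over angle structures, not by the Mahler measure. Bridging those two quantities is the entire content of the open conjecture; invoking concavity ``in the spirit of Rivin'' does not supply the missing identity or inequality.

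The equality characterization has an additional gap. Your ``only if'' direction argues that equality forces right angles and hence totally geodesic \emph{checkerboard} surfaces, and your ``if'' direction assumes that the existence of embedded totally geodesic surfaces in every $X(L_n)$ forces the checkerboard surfaces themselves to be totally geodesic (equivalently, the torihedra to be right-angled). The conjecture as stated refers to arbitrary embedded totally geodesic surfaces, and a hyperbolic manifold can contain such surfaces without its checkerboard surfaces being geodesic; \cite[Theorem~5.1]{ckp:bal_vol} characterizes geodesic checkerboard surfaces only for semi-regular links, so the ``suitable extension beyond the semi-regular setting'' you invoke is itself an unproved claim. In short: the reduction in your first paragraph is sound and matches the paper's own motivation, but everything after it is a research program resting on two open statements, so the proposal is not a proof.
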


\begin{minipage}{4.5in}
  For example, for the Rhombitrihexagonal link $\mathcal R$, with fundamental domain for $R_1$ as shown,
  by \refeqn{semiregular} and \cite[Corollary~14]{ckl:mm_voldet} we have
  $$ \lim_{n\to \infty}\frac{\log|TH_1(X(R_n))|}{\vol(X(R_n))}=\frac{1}{4\pi}\left(\frac{10\,\vtet + 2\pi\,\log(6)}{10\,\vtet + 3\,\voct}\right) \approx \frac{1.0126}{4\pi}. $$
\end{minipage}
\hfill
\begin{minipage}{1.5in}
\begin{center}
 \includegraphics[height=1.2in]{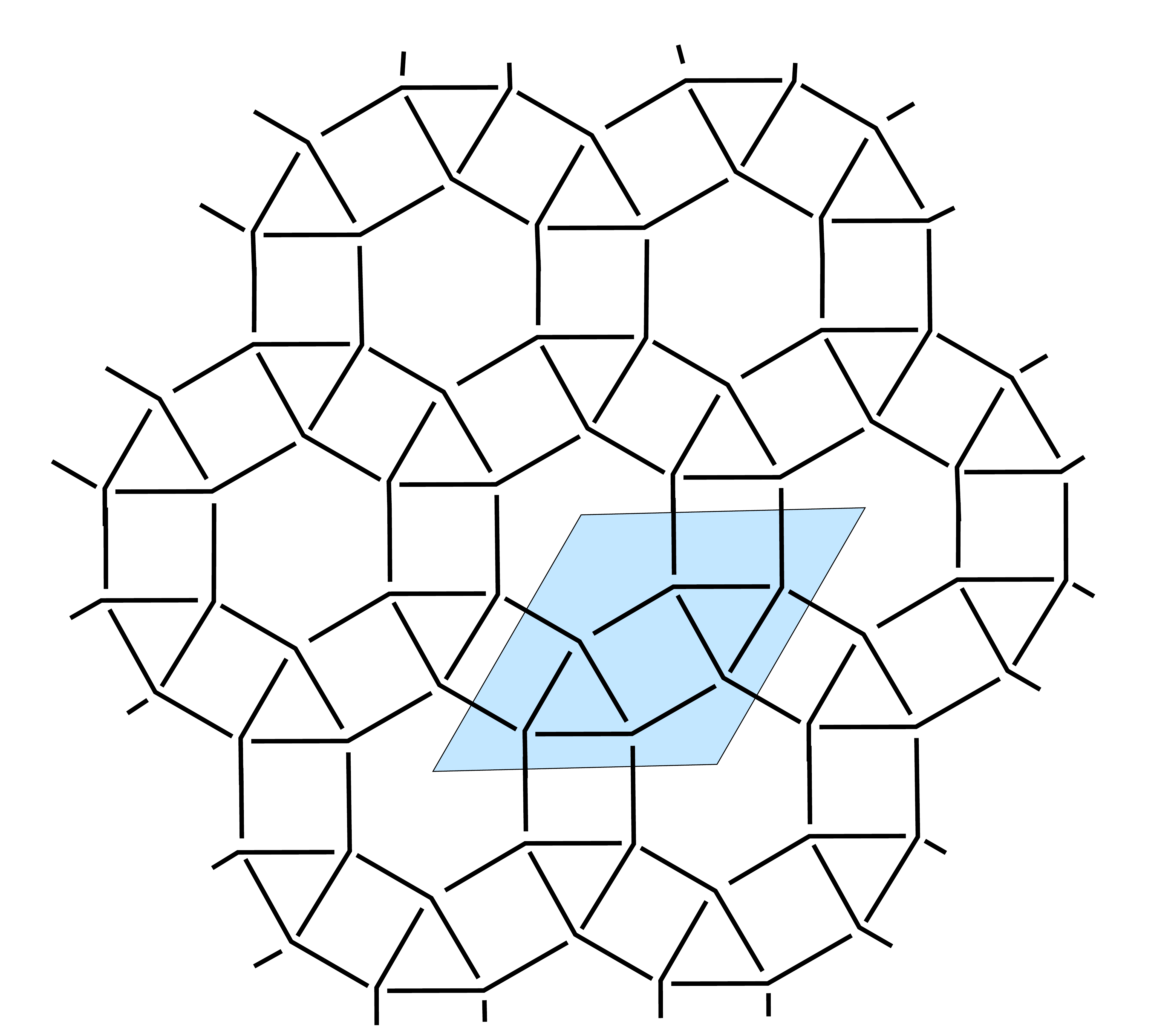} 
\end{center}
\end{minipage}

In~\cite{ckl:mm_voldet}, rigorously computed Mahler measures for
several other biperiodic alternating links also imply that their
corresponding limits are slightly greater than $1/4\pi$.

\section{Spanning tree entropy for regular planar lattices}

Theorem~\ref{Thm2} also provides insight into the surprising fact that each
regular planar lattice has a spanning tree entropy given
by the volume of a hyperbolic polyhedron (cf. \cite{ShrockWu}).
Let $\tau(G)$ be the number of spanning trees of a graph $G$.  Let
$v(G)$ be the number of its vertices.  For a biperiodic lattice $\G$,
let $G_n$ denote the exhaustive nested sequence of finite planar
graphs $\G\cap(n\Z\times n\Z)$.  The spanning tree entropy of $\G$ is
defined as
$$ T(\G) = \lim_{n\to\infty}\frac{\log\tau(G_n)}{v(G_n)}. $$

\begin{corollary}\label{trees}
  The spanning tree entropies for the regular triangular, square and
  hexagonal lattices, $\G^{\triangle},\,\G^{\square}$ and
  $\G^{\hexagon}$, are as follows:
$$ T(\G^{\triangle}) =  10\,\vtet/2\pi, \quad T(\G^{\square}) =   2\,\voct/2\pi, \quad T(\G^{\hexagon}) =  5\,\vtet/2\pi.$$
\end{corollary}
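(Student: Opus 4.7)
The plan is to identify each regular lattice as a checkerboard graph of one of the two biperiodic alternating links in Theorem~\ref{Thm2}, then combine Lyons' theorem on spanning tree entropy with the Mahler measure values already computed in the proof of that theorem.

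First, I would identify the Tait graphs: both checkerboard graphs of the square weave $\W$ are copies of $\G^{\square}$, while the two checkerboard graphs of the triaxial link $\Q$ are, as planar duals, the triangular lattice $\G^{\triangle}$ and the hexagonal lattice $\G^{\hexagon}$.

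Next, I would invoke Lyons' theorem \cite{lyons} (also used in \cite{sw:graph_complexity}): for any biperiodic planar graph $\G$ whose toroidal quotient $G_1 = \G/\Z^2$ has $d$ vertices per fundamental domain,
$$T(\G) = \lim_{n\to\infty}\frac{\log\tau(G_n)}{v(G_n)} = \frac{\m(D_{G_1}(x,y))}{d}.$$
Combined with the identity $\m(D_{G_{L_1}}(x,y))=\m(p_{L_1}(z,w))$ established in the proof of Lemma~\ref{entropy}, this reduces Corollary~\ref{trees} to the values $2\pi\,\m(p_{W_1}) = 4\,\voct$ and $2\pi\,\m(p_{Q_1}) = 10\,\vtet$ cited from \cite{ckl:mm_voldet} in the proof of Theorem~\ref{Thm2}.

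Finally, I would carry out the vertex bookkeeping in the fundamental domains of Figure~\ref{fig:weave}(b) and (d). The Tait graph of $W_1$ contains $d=2$ vertices of $\G^{\square}$; the Tait graph of $Q_1$ contains $d=1$ vertex of $\G^{\triangle}$; and its planar dual contains $d=2$ vertices of $\G^{\hexagon}$. Since $\tau(G)=\tau(G^{*})$ for planar duals, the same Mahler measure $\m(p_{Q_1})$ produces both $T(\G^{\triangle})$ and $T(\G^{\hexagon})$, and the factor-of-two between them is precisely the vertex-count ratio. The main obstacle in this plan is purely bookkeeping: one must carefully match the link's fundamental domain, which controls the normalization of $p_{L_1}$, with a fundamental domain of the regular lattice, which controls the normalization of $T(\G)$; once this vertex count is pinned down, the three identities follow by direct substitution.
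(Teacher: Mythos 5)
Your proposal is correct and follows essentially the same route as the paper: identify the regular lattices as Tait graphs (and duals) of $W_1$ and $Q_1$, pass from spanning tree counts to the Mahler measures $\m(p_{W_1})$ and $\m(p_{Q_1})$ via the chain of equalities in Lemma~\ref{entropy}, plug in the values $2\pi\,\m(p_{W_1})=4\,\voct$ and $2\pi\,\m(p_{Q_1})=10\,\vtet$, and normalize by the vertex counts $2$, $1$, $2$ per fundamental domain. The only cosmetic difference is that the paper threads the computation through $|TH_1(X(L_n))|$ (to connect with Theorem~\ref{Thm2}), whereas you go directly from Lyons' theorem to the Mahler measures; the bookkeeping and final values agree.
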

\begin{proof}
  Let $H_n$ denote the toroidal graphs $\G/(n\Z\times n\Z)$, such that
  $v(H_1)=v(G_1)$.  The medial graph of $H_n$ is the projection graph
  of a toroidally alternating link $L_n$ whose Tait graph is $H_n$.
  By Lemma~\ref{entropy} and its proof, the spanning tree entropy of
  $\G$ satisfies:
$$ T(\G) = \lim_{n\to\infty}\frac{\log\tau(G_n)}{v(G_1)\,n^2} = \lim_{n\to \infty}\frac{\log|TH_1(X(L_n))|}{v(H_1)\,n^2}.$$

For the square lattice $\G^{\square}$, we use the square weave $\W$ with $W_1$ as
above, so that $v(H_1^{\square})=2$. For the regular triangular and
hexagonal lattices, $\G^{\triangle}$ and $\G^{\hexagon}$, we use the triaxial link $\Q$ with $Q_1$ as above,
so that $v(H_1^{\triangle})=1$ and $v(H_1^{\hexagon})=2$.
By Theorem~\ref{Thm2} and its proof, their spanning tree entropies are as follows:
$$ T(\G^{\triangle})=\lim_{n\to \infty}\frac{\log|TH_1(X(Q_n))|}{n^2}
=\m(p_{Q_1}(z,w)) =\frac{10\,\vtet}{2\pi}.$$
$$ T(\G^{\square})=\lim_{n\to \infty}\frac{\log|TH_1(X(W_n))|}{2n^2}
= \frac{\m(p_{W_1}(z,w))}{2} =\frac{2\,\voct}{2\pi}.$$
$$ T(\G^{\hexagon})=\lim_{n\to \infty}\frac{\log|TH_1(X(Q_n))|}{2n^2}
= \frac{\m( p_{Q_1}(z,w))}{2} =\frac{5\,\vtet}{2\pi}. $$ 
\end{proof}

The mysterious $2\pi$ factor in Corollary~\ref{trees} seems closely related to Question~\ref{Q1}.

\subsection*{Acknowledgements}
We thank Stefan Friedl, Dan Silver and Susan Williams for useful
discussions.  We thank the organizers of the workshop {\em Growth in
  Topology and Number Theory} at the Hausdorff Center for Mathematics
in Bonn, where this work was started.  We also acknowledge support by
the Simons Foundation.

\bibliographystyle{amsplain} 
\bibliography{references}
\end{document}